\documentclass[11pt]{article}
\usepackage{amsmath,amsmath,amsthm,amssymb}
\usepackage{courier}
\usepackage{graphicx}
\usepackage{enumerate}
\usepackage{fullpage}
\usepackage{hyperref}

\newtheorem{theorem}{Theorem}
\newtheorem*{thm}{Theorem}
\newtheorem{lemma}{Lemma}

\newtheorem{proposition}{Proposition}
\newtheorem{remark}{Remark}        

\numberwithin{equation}{section}


\begin{document}

\title{A refined approach for non-negative entire solutions of $\Delta u + u^p = 0$ with subcritical Sobolev growth}

\author{John Villavert\footnote{email: john.villavert@gmail.com, john.villavert@utrgv.edu} \\
[0.2cm] {\small University of Texas Rio Grande Valley}\\
{\small Edinburg, Texas 78539, USA}
}
 
\date{}

\maketitle

\begin{abstract} 
Let $N\geq 2$ and $1 < p < (N+2)/(N-2)_{+}$. Consider the Lane-Emden equation $\Delta u + u^p = 0$ in $\mathbb{R}^N$ and recall the classical Liouville type theorem: if $u$ is a non-negative classical solution of the Lane-Emden equation, then $u \equiv 0$. The method of moving planes combined with the Kelvin transform provides an elegant proof of this theorem. A classical approach of J. Serrin and H. Zou, originally used for the Lane-Emden system, yields another proof but only in lower dimensions. Motivated by this, we further refine this approach to find an alternative proof of the Liouville type theorem in all dimensions.
\end{abstract}

\noindent{\bf{Keywords}}: Lane Emden equation, Liouville theorem, non-existence, scalar curvature equation, Yamabe equation. \medskip

\noindent{\bf{MSC2010}}: Primary: 35B08, 35B53, 35J60. 

\section{Introduction}\label{Introduction}

We revisit the Liouville property of non-negative entire solutions for the Lane-Emden equation
\begin{equation}\label{LE}
  \begin{array}{l}
    \Delta u + u^{p} = 0, ~\, x \in \Omega, 
  \end{array}
\end{equation}
where $N\geq 2$, $1 < p < (N+2)/(N-2)_{+}$ and $\Omega \subseteq \mathbb{R}^N$. By a solution of equation \eqref{LE} we always mean a solution in the classical sense. The Lane-Emden equation arises as a model in astrophysics and has important consequences to fundamental problems in conformal geometry. Specifically, when $\Omega = \mathbb{R}^N$ and $p = (N+2)/(N-2)$, the solvability of the Lane-Emden equation is related to a sharp Sobolev inequality and the classical Yamabe problem \cite{LeeParker87}. Equation \eqref{LE} is also the ``blow-up" equation associated with a family of second-order elliptic equations with Dirichlet data. In other words, the non-existence of positive solutions to the Lane-Emden equation is important in obtaining a priori bounds to positive solutions of elliptic problems within this family \cite{GS81apriori}. The critical Sobolev exponent 
\begin{equation}
p_S := (N+2)/(N-2)_{+} 
= \left\{\begin{array}{c l}
          \infty & \mbox{ if } N = 2, \medskip \\
 	  (N+2)/(N-2) & \mbox{ if } N \geq 3, \\
        \end{array}
 \right.
\end{equation}
plays the role of the dividing number for the existence and non-existence of solutions. More precisely, recall the following celebrated result.
\begin{thm}
Let $N \geq 3$, $p > 1$ and $\Omega = \mathbb{R}^N$. Then equation \eqref{LE} has no positive solution, if and only if $p < p_S$. In particular,
\begin{enumerate}[(a)]
\item if $p = p_S$, then every positive solution of \eqref{LE} has the form
\begin{equation}\label{classify}
 u(x) = c(N)\Big( \frac{t}{t^2 + |x - x_0 |^{2}} \Big)^{(N-2)/2}
\end{equation}
where $c = c(N), t$ are positive constants and $x_0$ is some point in  $\mathbb{R}^N$;

\item if $1 < p < p_S$ and $u$ is a non-negative solution of \eqref{LE}, then necessarily $u \equiv 0$.
\end{enumerate}
\end{thm}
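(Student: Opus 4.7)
\bigskip

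\noindent\textbf{Proof proposal.} The classification part (a) for the critical exponent is the classical Caffarelli--Gidas--Spruck result obtained by the method of moving planes, which is already assumed in the literature. I will therefore concentrate on part (b), the Liouville property for $1 < p < p_S$, which is what the paper intends to reprove via a Serrin--Zou style integral/Pohozaev argument extended to all dimensions. The plan is to combine (i) weighted integral estimates obtained by testing the equation against powers of $u$ cut off by $\eta$, with (ii) a Pohozaev identity on balls whose bulk coefficient is positive precisely in the subcritical range, and (iii) a feedback scheme that lets the growth exponent in the integral estimates be iteratively improved until the boundary contributions in the Pohozaev identity can be killed along a suitable sequence $R_k\to\infty$.

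First, I would set up the integral/energy estimates. Let $\eta\in C_c^\infty(\mathbb{R}^N)$ be a standard cutoff with $\eta\equiv 1$ on $B_R$, $\mathrm{supp}\,\eta\subset B_{2R}$, and $|\nabla\eta|\lesssim 1/R$. Multiplying $\Delta u + u^p = 0$ by $u^q\eta^s$ for suitable $q\geq 0$ and $s\gg 1$, then integrating by parts, yields
\begin{equation*}
q\int u^{q-1}|\nabla u|^2\eta^s + \int u^{p+q}\eta^s \;\leq\; C\,R^{-2}\!\int_{B_{2R}\setminus B_R} u^{q+1},
\end{equation*}
after absorbing the cross term via Cauchy--Schwarz. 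A scaling analysis / bootstrap in $q$ then produces bounds of the form $\int_{B_R} u^{p+q}\,dx \leq C\, R^{N-2(p+q)/(p-1)}$, together with analogous bounds on $\int |\nabla u|^2 u^{q-1}$.

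Next, I would derive the Pohozaev-type identity on $B_R$: multiplying the equation by $x\cdot\nabla u + \tfrac{N-2}{2}u$ and integrating over $B_R$ yields
\begin{equation*}
\Bigl(\frac{N}{p+1}-\frac{N-2}{2}\Bigr)\int_{B_R} u^{p+1}\,dx \;=\; \mathcal{B}(R),
\end{equation*}
where $\mathcal{B}(R)$ is a linear combination of boundary integrals over $\partial B_R$ involving $u^{p+1}$, $|\nabla u|^2$, $|\partial_\nu u|^2$, and $u\partial_\nu u$. The coefficient on the left-hand side is strictly positive for $p < p_S$. It therefore suffices to show that $\mathcal{B}(R_k)\to 0$ along some sequence $R_k\to\infty$, which forces $\int_{\mathbb{R}^N} u^{p+1}=0$ and hence $u\equiv 0$.

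The main obstacle, and the place where the naive Serrin--Zou argument fails in higher dimensions, is gaining enough integrability on $|\nabla u|$ on annular shells to conclude $\mathcal{B}(R_k)\to 0$. The idea is to interpret the integral estimates from step one as saying that the shell averages $R^{-1}\int_R^{2R}\!\!\int_{\partial B_r} (u^{p+1}+u^{q-1}|\nabla u|^2)\,dS\,dr$ are summable in $R$-scales whenever $q$ may be chosen in a certain range, so by Fubini some ``good radius'' $r_k\in [R_k,2R_k]$ annihilates $\mathcal{B}(r_k)$ up to a controllable error. The refinement I would push is to iterate this together with local elliptic regularity (Moser/Harnack on dyadic shells using $\Delta u = -u^p\leq 0$ and the integral bounds on $u$) to upgrade pointwise decay on $u$ and $|\nabla u|$ in the annulus; this upgrade, fed back into the Pohozaev identity, enlarges the admissible range of $q$ and ultimately covers the full subcritical range $1<p<p_S$ for \emph{every} $N\geq 2$. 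The hard technical point will be matching the exponents in the feedback so that the admissible set of $q$ remains nonempty once $N$ is large; this is precisely where the earlier Serrin--Zou approach, without the elliptic-regularity upgrade on shells, had to restrict the dimension.
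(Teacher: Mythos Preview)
Your overall architecture --- Pohozaev on balls, control of the boundary flux by integral estimates, and selection of good radii via a measure/Fubini argument --- is exactly the Serrin--Zou framework the paper adopts. Where your proposal diverges from the paper, and where it is underspecified, is the mechanism you invoke to beat the dimension obstruction.

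The paper does \emph{not} iterate Moser/Harnack on dyadic shells to upgrade pointwise decay. Its key new input is Proposition~\ref{energy prop}: a local estimate $\int_{B_1} u^{q_0}\le C(N,p)$ with $q_0>(N-1)(p-1)/2$. For $N\ge 5$ this threshold exceeds $p$, so the elementary cutoff bound $\int_{B_1}u^p\le C$ is insufficient, and the paper obtains the extra integrability by first using moving planes/spheres to get radial monotonicity of $u$, then the integral representation $u(x)\ge C\int |x-y|^{2-N}u(y)^p\,dy$ to deduce $|x|\,u(x)^{(p-1)/2}\le C$, hence $\int_{B_1}u^{\gamma(p-1)/2}\le C$ for any $\gamma\le N$. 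With this $q_0$ in hand, the paper works entirely with \emph{Sobolev embeddings on $\mathbb{S}^{N-1}$} and $W^{2,k}$ elliptic estimates to bound the spherical quantities $G_1(R)$ and $G_2(R)$, interpolating between $L^{\ell}$ (where $\ell p=q<q_0$) and $L^{k}$ (where $k=(p+1)/p$) norms of $D^2u$ on spheres. The output is a feedback inequality $F(\tilde R)\le C R^{-a}F(2R)^{1-b}$ with $a,b>0$, and the verification $b>0$ reduces precisely to $q>(N-1)(p-1)/2$.

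Your proposed substitute --- Moser/Harnack on shells fed back into the admissible range of $q$ --- is not clearly adequate. First, $u$ is \emph{super}harmonic, so Harnack gives lower bounds, not the upper (decay) bounds you need; to get $L^\infty$ or improved $L^q$ control from $-\Delta u=u^p$ you must already control $u^p$ in $L^s$ with $s$ above a dimension-dependent threshold, which is exactly the missing integrability. Second, your bootstrap ``$\int_{B_R}u^{p+q}\le CR^{N-2(p+q)/(p-1)}$'' from testing with $u^q\eta^s$ requires bounding $R^{-2}\int_{B_{2R}\setminus B_R}u^{q+1}$, so the iteration only advances when $q+1$ lies in the already-known range; it does not obviously climb past the barrier $(N-1)(p-1)/2$ without an external input. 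In short, the ``hard technical point'' you flag --- matching exponents so the feedback closes in high dimension --- is resolved in the paper not by shell-wise elliptic regularity but by importing monotonicity (via moving planes) to secure the $L^{q_0}$ estimate, and then by doing the boundary analysis on $\mathbb{S}^{N-1}$ rather than in bulk.
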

The classification result of part (a) was obtained in \cite{GNN81} under the decay assumption that $u(x) = O(|x|^{2-N})$ (see \cite{Obata71} for an equivalent result), and Caffarelli, Gidas and Spruck in \cite{CGS89} later improved the result by dropping this decay assumption. Part (b) was obtained in \cite{GS81} using local integral estimates and identities. The proofs of both parts were simplified considerably in \cite{CL91} with the aid of the Kelvin transform and the method of moving planes. We refer those who are interested  to \cite{Farina07} and the references therein for more classification and Liouville type theorems for the Lane-Emden equation with respect to various types of solutions, e.g., stable, finite Morse index, radial, or sign-changing, etc.

In this paper, we use a carefully modified approach originated by Serrin and Zou \cite{SZ96} based on integral estimates and a Rellich-Pohozaev identity to prove 
\begin{theorem}\label{rough liouville theorem}
Let $N \geq 2$, $\Omega = \mathbb{R}^N$, $1 < p < p_S$, and suppose $u$ is a non-negative solution of equation \eqref{LE}. Then necessarily $u \equiv 0$.
\end{theorem}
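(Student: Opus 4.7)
The plan is to argue by contradiction. Suppose $u \not\equiv 0$; by the strong maximum principle $u > 0$ on $\mathbb{R}^N$. The contradiction will arise from combining a Rellich-Pohozaev identity on balls $B_R$ with global integral decay estimates that force every surface term in that identity to vanish along a suitable sequence of radii.

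First, multiplying \eqref{LE} by $x \cdot \nabla u$ and integrating by parts over $B_R$, and combining the result with the energy identity obtained by multiplying \eqref{LE} by $u$, one can eliminate $\int_{B_R} |\nabla u|^2\,dx$ to obtain
$$
\Big(\frac{N}{p+1} - \frac{N-2}{2}\Big)\int_{B_R} u^{p+1}\,dx = \mathcal{B}(R),
$$
where $\mathcal{B}(R)$ is a linear combination of $\int_{\partial B_R} u\,\partial_\nu u\,dS$, $R\int_{\partial B_R} u^{p+1}\,dS$, $R\int_{\partial B_R} |\nabla u|^2\,dS$, and $R\int_{\partial B_R} (\partial_\nu u)^2\,dS$. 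Since $p < p_S$ the coefficient on the left is strictly positive, and the entire proof reduces to producing a sequence $R_k \to \infty$ along which $\mathcal{B}(R_k) \to 0$.

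Next, I would derive global volume estimates of the form
$$
\int_{B_R} u^{p+1}\,dx + \int_{B_R} u^{p-1}|\nabla u|^2\,dx \le C R^{N - 2(p+1)/(p-1)}
$$
using the Serrin-Zou test-function technique: insert $u^q \eta^m$ into the weak form of \eqref{LE} with $\eta$ a smooth cutoff supported in $B_{2R}$ and equal to one on $B_R$, apply Young's inequality to absorb the gradient terms, and iterate over an admissible range of exponents $q$ to close the estimate. The exponent $N - 2(p+1)/(p-1)$ is strictly negative exactly because $p < p_S$, so the above integrals tend to zero as $R \to \infty$; in particular $u \in L^{p+1}(\mathbb{R}^N)$ and $u^{(p-1)/2} |\nabla u| \in L^2(\mathbb{R}^N)$.

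The main obstacle lies in the passage from volume to surface information. A Fubini argument in the radial variable immediately produces $R_k \to \infty$ along which the unweighted surface integrals of $u^{p+1}$ and $u^{p-1}|\nabla u|^2$ decay at a controlled rate; but $\mathcal{B}(R)$ also contains the quantities $R \int_{\partial B_R} |\nabla u|^2\,dS$ and $R \int_{\partial B_R} (\partial_\nu u)^2\,dS$, carrying an extra factor of $R$ that a crude averaging cannot absorb in high dimensions. It is precisely this factor of $R$ that pins the original Serrin-Zou argument to low $N$. The refinement I would implement is to derive companion volume bounds for the unweighted $\int_{B_R} |\nabla u|^2\,dx$ by running the test-function iteration with a second family of exponents tailored to $N$ and $p$, and then to use \eqref{LE} itself on $\partial B_R$ to trade normal derivatives for tangential derivatives and for lower powers of $u$ that are already controlled. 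Once $\mathcal{B}(R_k) \to 0$ is secured in every dimension $N \ge 2$, the Pohozaev identity of the first step forces $\int_{\mathbb{R}^N} u^{p+1}\,dx = 0$, contradicting $u > 0$ and completing the proof.
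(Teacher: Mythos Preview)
Your proposal has a genuine gap at the volume-estimate step, and that gap is in fact the entire difficulty of the theorem.

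You assert that the Serrin--Zou test-function technique, after ``iterating over an admissible range of exponents $q$,'' yields
\[
\int_{B_R} u^{p+1}\,dx \le C R^{\,N-2(p+1)/(p-1)}.
\]
But notice that $p+1 > N(p-1)/2$ is equivalent to $p<p_S$, so this exponent is negative and the bound alone forces $\int_{\mathbb{R}^N} u^{p+1}=0$; the Pohozaev identity and the delicate surface analysis you outline afterwards would be superfluous. In other words, the displayed inequality already \emph{is} the Liouville theorem. The plain cut-off test (multiplying the equation by $\eta^m$ and integrating) produces only
\[
\int_{B_R} u^{p}\,dx \le C R^{\,N-2p/(p-1)},
\]
i.e.\ the exponent $q_0=p$ in \eqref{cond2}. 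There is no elementary bootstrap from $L^p$ to $L^{p+1}$ via $u^q\eta^m$ test functions; the equation is critical for this kind of iteration, and ``closing the estimate'' is precisely what fails in dimensions $N\ge 5$. The paper is explicit that $q_0=p$ satisfies \eqref{cond1} only when $N\le 4$, and that overcoming this obstruction is the whole point.

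The paper's route is different from what you sketch in two respects. First, it does not try to reach $q_0=p+1$; it settles for any $q_0>(N-1)(p-1)/2$, obtained either from the Gidas--Spruck Bochner-formula argument or from the pointwise decay $|x|\,u(x)^{(p-1)/2}\le C$ that follows from radial monotonicity (moving planes/spheres) together with the integral representation. Second, the surface terms are not handled by ``trading normal for tangential derivatives via the PDE on $\partial B_R$''; instead one works on the sphere $\mathbb{S}^{N-1}$, applies $W^{2,\ell}\hookrightarrow L^s$ Sobolev embeddings there, and interpolates between the $L^{\ell}$-scale (governed by the available $q_0$) and the $L^{k}$-scale with $k=(p+1)/p$ (governed by $F(2R)$) to produce a \emph{feedback} inequality $F(R)\le CR^{-a}F(2R)^{1-b}$ with $a,b>0$. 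It is exactly the hypothesis $q_0>(N-1)(p-1)/2$ that makes $b>0$ in this interpolation, and this is where the dimension restriction of the original Serrin--Zou method is removed.
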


The initial application of Serrin and Zou's technique was to extend this version of the Liouville type theorem to the following Lane-Emden system ($p,q > 0$):
\begin{equation}\label{LE system}
  \begin{array}{l}
    \Delta u + v^{p} = 0, ~\, x \in \Omega, \\
    \Delta v + u^{q} = 0, ~\, x \in \Omega.
  \end{array}
\end{equation}
Obtaining an analogous non-existence theorem for this system, however, proved rather difficult. It remains an open problem and is commonly known as the Lane-Emden conjecture. More precisely, the conjecture infers that system \eqref{LE system} has no positive classical solution in $\Omega = \mathbb{R}^N$, if and only if the subcritical condition
\[ \frac{1}{1 + p} + \frac{1}{1 + q} > 1 - \frac{2}{N} \]
holds. This subcritical condition is indeed a necessary one because of the existence result in \cite{SZ98}, i.e., the Lane-Emden system admits a positive entire solution in the non-subcritical case. Thus, it only remains to verify the sufficiency of the subcritical condition, and this has only been achieved in special cases. The conjecture was proved under radial solutions for any dimension \cite{Mitidieri96}. Then, Serrin and Zou \cite{SZ96} introduced their approach to prove the non-existence of positive solutions in the subcritical case but with the added restriction that solutions have polynomial growth and $N = 3$. Pol\'{a}\v{c}ik, Quittner and Souplet \cite{PQS07} refined the method and result of \cite{SZ96} by removing the growth assumption on solutions, and Souplet \cite{Souplet09} pushed it further to include the case when $N=4$ (for other related partial results, see \cite{BM02,CL09,FiFe94,RZ00}). Hence, the conjecture is completely resolved for dimension $N \leq 4$. Interestingly enough, Souplet \cite{Souplet09} further indicates that their modified approach also applies directly to equation \eqref{LE} but in lower dimensions, of course. In view of this, our goal in the present paper is to further refine the approach to work for the scalar equation in all dimensions. We do this in order to present the underlying obstructions of the method when dealing with such elliptic equations and demonstrate how to possibly overcome them. 

Another motivation for our study centers on the fact that moving plane arguments have their limitations even for related scalar problems. For example, the method of moving planes may not always apply to equations of the form 
$$-\Delta u = c(x)u^p \,\text{ in }\, \Omega = \mathbb{R}^N$$ 
when the coefficient $c(x)$ has certain growth, e.g., $c(x) = |x|^{\sigma}$ with $\sigma > 0$ (see \cite{PhanSouplet12}). Therefore, the improved approach here may prove useful in similar situations. Despite this, the method of moving planes and its variants are powerful tools in studying properties of solutions to many elliptic equations and systems. For more details, the interested reader is referred to the following partial list of papers and the references found therein: \cite{CGS89,CL91,CC97,GNN79,GNN81,HL15,Li96,YYLiZhu95,LiZh03,Polacik11,RZ00,Serrin71} (see also \cite{Gui96,LeiLi16,Ni82} for closely related results on Lane-Emden equations).

Roughly speaking, a key ingredient in our proof of the Liouville type theorem relies on the Rellich-Pohozaev identity. In particular, for a non-negative solution $u$ of the subcritical Lane-Emden equation in 
$\Omega = B_{R}(0) := \{ x \in \mathbb{R}^N : |x| < R\}$ with boundary $\partial B_{R}(0)$, we have 
\begin{equation*}
\Big( \frac{N}{p+1} - \frac{N-2}{2} \Big) \int_{B_{R}(0)} u^{p+1} \,dx 
= \int_{\partial B_{R} (0)} \Big\lbrace R\frac{u^{p+1}}{p+1} + R^{-1}|x\cdot Du|^2 - \frac{R}{2}|D u|^2 + \frac{N-2}{2} u \frac{\partial u}{\partial \nu} \Big\rbrace \,dS,
\end{equation*}
where $\nu = x/|x|$ is the outer unit normal vector at $x \in \partial B_{1}(0)$ and  $\partial u/ \partial \nu = \nabla u \cdot \nu$ is the directional derivative. The idea is to exploit this identity to estimate the energy quantity
$$ F(R) := \int_{B_{R}(0)} u^{p+1} \,dx.$$
Namely, the Rellich-Pohozaev identity implies the estimate
$$ F(R) \leq C( G_{1}(R) + G_{2}(R)),$$
where 
$$ G_{1}(R) := R^{N}\int_{\mathbb{S}^{N-1}} u(R,\theta)^{p+1} \,d\theta,$$
and
$$ G_{2}(R) := R^{N}\int_{\mathbb{S}^{N-1}} \Big( |D u(R,\theta)|^2 + R^{-2}u(R,\theta)^2 \Big) \,d\theta.$$
Here we are writing $u = u(r,\theta)$ in spherical coordinates with $r = |x|$ and $x/|x| \in \mathbb{S}^{N-1}$ ($x \neq 0$), and $D = D_x$ is the gradient operator in terms of the spatial variable $x$. 

\begin{remark}
Hereafter, for functions $f = f(\theta)$ in $L^{s}(\mathbb{S}^{N-1})$, we denote its norm by $ \|f(\theta)\|_{L^{s}(\mathbb{S}^{N-1})}$, and for brevity we write 
\begin{equation}\label{spherical norm}
\|u(r)\|_{s} = \|u(r,\theta)\|_{L^{s}(\mathbb{S}^{N-1})}.
\end{equation}
\end{remark}
It will suffice to prove that equation \eqref{LE} has no positive entire solutions. Therefore, if we assume $u$ is indeed a positive solution, then we shall control the surface integrals above in terms of $F(R)$ to arrive at the feedback estimate $F(R) \leq C[G_{1}(R) + G_{2}(R)] \leq CR^{-a} F(R)^{1-b}$ for some $a,b > 0$. Hence, after taking a sequence $R = R_j \rightarrow \infty$, we may conclude that $\|u\|_{L^{p+1}(\mathbb{R}^N)} = 0$. Thus, $u\equiv 0$, and we arrive at a contradiction. The advantage of this approach is that we effectively remove one degree of freedom in the spatial dimension when estimating the energy quantity. The preceding argument, however, has a bottleneck in the sense that a weaker integrability of $u$ is still required for the procedure to run smoothly. By weaker we mean that $u$ is not assumed a priori to have, say, finite energy or be integrable (see Remark \ref{remarks on energy} below). This is due to certain standard estimates employed, e.g., the Sobolev and interpolation inequalities, which are sensitive to the dimension and are not quite sharp enough to get the feedback estimates for larger $N$. Specifically, for any non-negative solution $u$ of equation \eqref{LE}, we require that there exists a number $q_0 := p + \varepsilon_0$ with
\begin{equation}\label{cond1}
q_0 > (N-1)(p-1)/2
\end{equation}
such that
\begin{equation}\label{cond2} 
\int_{B_{1}(0)} u^{q_0} \,dx \leq C(N,p),
\end{equation}
where $C(N,p) > 0$ depends only on $N$ and $p$. In fact, \eqref{cond2} is known to hold for any $N \geq 2$ whenever $q_0 = p$, i.e.,
\begin{equation}\label{Lp}
\int_{B_{1}(0)} u^p \,dx \leq C(N,p).
\end{equation}
This follows simply by testing equation \eqref{LE} directly by a suitable cut-off function; see, for example, \cite{Souplet09,PhanSouplet12} for the details. Then, it is clear that \eqref{cond1} is also satisfied for $q_0 = p$ so long as $N \leq 4$, and this is precisely where the obstruction on the dimension appears. For $N \geq 5$, estimate \eqref{Lp} is enough to get the non-existence result but only in the non-optimal range $1 < p < (N-1)/(N-3)$, or equivalently,
\begin{equation}\label{non-optimal subcritical}
p > (N-1)(p-1)/2.
\end{equation}
Similar obstructions occur for the Lane-Emden system as indicated earlier by the authors in \cite{CHL16}, which has motivated our work here. Nonetheless, we shall see below that conditions \eqref{cond1} and \eqref{cond2} are necessary and sufficient for the non-existence result.

After the completion of this paper, Professor Souplet kindly sent us his paper \cite{Souplet12} concerning Liouville type theorems for Schr\"{o}dinger elliptic systems with gradient structure and where conditions \eqref{cond1} and \eqref{cond2} also appear. There the conditions ensured that such systems do not admit any non-negative entire solutions having certain exponential growth besides the trivial solution (see Theorem 2 in \cite{Souplet12}).

\begin{remark}\label{remark2}
If $N \geq 3$ and $1 < p \leq N/(N-2)$, then estimating $G_{1}(R)$ and $G_{2}(R)$ is much more direct and the proof of the Liouville type theorem is far simpler in this case. In fact, it is more or less a consequence of \eqref{Lp}. So we assume hereafter that\begin{equation}\label{subcritical range}
\frac{N}{N-2} < p < p_S \,\text{ whenever }\, N \geq 3.
\end{equation}
The case $N=2$ follows in a similar fashion, and it provides the complete non-existence result for $p \in (1,\infty)$. In view of \eqref{non-optimal subcritical}, we also assume that 
\begin{equation}
p \leq (N-1)(p-1)/2 \,\text{ whenever }\, N \geq 5.
\end{equation}
\end{remark}

\begin{remark}
In some of the previous papers utilizing Serrin and Zou's approach, the resulting Liouville type theorems were proved under a boundedness assumption on solutions. Then a ``doubling" property was used to lift this extra condition. However, we make no such boundedness assumptions in this paper. Moreover, the method has been further developed recently and some notable papers are \cite{FYZ14,CHL16,Cowan14,QS12a}. 
\end{remark}
 

\section{Proof of Theorem \ref{rough liouville theorem}}

We provide the details of the main steps for the reader's convenience preferring to state without proof those intermediate results we deem standard. Meanwhile, we hope to provide enough of the details to successfully highlight where exactly the bottleneck of the approach arises. To simplify our presentation, since the main result is known in lower dimensions, we assume $N \geq 5$ unless specified otherwise.

\subsection{Preliminary estimates and scaling invariance}
The following lemmas are elementary (see \cite{CHL16,Souplet09,SZ96}).
\begin{lemma}[Sobolev inequalities on $\mathbb{S}^{N-1}$]
Let $j \geq 1$, $1 \leq z_1 < z_2 \leq \infty$, and $z_1 \neq (N-1)/j$. Then 
$$W^{j,z_1}(\mathbb{S}^{N-1}) \hookrightarrow L^{z_2}(\mathbb{S}^{N-1}),$$ 
where
\begin{equation*}
  \left\{\begin{array}{ll}
    \frac{1}{z_2} = \frac{1}{z_1} - \frac{j}{N-1}, & \mbox{ if } z_1 < \frac{N-1}{j}; \medskip \\
    z_2 = \infty, & \mbox{ if } z_1 > \frac{N-1}{j}.
  \end{array}
\right.
\end{equation*}
Namely, for any $u = u(\theta) \in W^{j,z_1}(\mathbb{S}^{N-1})$, there exists a positive constant $C=C(j,z_1,N)$ such that 
$$ \|u\|_{L^{z_2}(\mathbb{S}^{N-1})} \leq C\Big( \|D^{j}_{\theta} u\|_{L^{z_1}(\mathbb{S}^{N-1})} + \|u\|_{L^{1}(\mathbb{S}^{N-1})} \Big).$$
\end{lemma}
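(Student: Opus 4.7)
The plan is to reduce the lemma to the classical Sobolev embedding on $\mathbb{R}^{N-1}$ via local charts and a partition of unity, and then clean up the right-hand side so that only $\|D^j_{\theta}u\|_{L^{z_1}}$ and $\|u\|_{L^{1}}$ appear (rather than the full $W^{j,z_{1}}$ norm). Since $\mathbb{S}^{N-1}$ is a smooth compact Riemannian manifold without boundary of dimension $N-1$, all the ingredients are standard.

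First I would fix a finite atlas $\{(U_{i},\phi_{i})\}_{i=1}^{m}$ together with a smooth partition of unity $\{\eta_{i}\}$ subordinate to $\{U_{i}\}$. For any $u\in W^{j,z_{1}}(\mathbb{S}^{N-1})$, write $u=\sum_{i}\eta_{i}u$, and pull each $\eta_{i}u$ back to a compactly supported function $v_{i}:=(\eta_{i}u)\circ\phi_{i}^{-1}$ on $\mathbb{R}^{N-1}$. Because the charts are smooth with Jacobians bounded above and below on the compact manifold, the Sobolev norms transfer in both directions: $\|v_{i}\|_{W^{j,z_{1}}(\mathbb{R}^{N-1})}$ and $\|\eta_{i}u\|_{W^{j,z_{1}}(\mathbb{S}^{N-1})}$ are comparable, and likewise for the $L^{z_{2}}$ norms.

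Next I would apply the classical Euclidean embedding to each $v_{i}$: the Gagliardo--Nirenberg--Sobolev inequality when $z_{1}<(N-1)/j$, giving the scaling relation $1/z_{2}=1/z_{1}-j/(N-1)$, or the Morrey embedding when $z_{1}>(N-1)/j$, giving $v_{i}\in L^{\infty}$. Summing over $i$ with the triangle inequality and using the Leibniz rule to expand $D^{k}(\eta_{i}u)$ for $k\le j$, one obtains the preliminary bound
\[
\|u\|_{L^{z_{2}}(\mathbb{S}^{N-1})}\le C\sum_{k=0}^{j}\|D^{k}_{\theta}u\|_{L^{z_{1}}(\mathbb{S}^{N-1})}.
\]
The cut-off functions $\eta_{i}$ and their derivatives are bounded by constants depending only on the atlas, so they are absorbed into $C=C(j,z_{1},N)$.

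The last step, and the only mildly delicate one, is to replace the sum of intermediate Sobolev seminorms by $\|D^{j}_{\theta}u\|_{L^{z_{1}}}+\|u\|_{L^{1}}$. For this I would use a Poincar\'{e}--type argument on the compact manifold: applied to $u-\bar{u}$ (with $\bar{u}$ the spherical mean), one has $\|u-\bar{u}\|_{L^{z_{1}}}\le C\|D_{\theta}u\|_{L^{z_{1}}}$, and $|\bar{u}|\le C\|u\|_{L^{1}}$ by H\"{o}lder and finite measure. Iterating this type of inequality $j$ times controls every $\|D^{k}_{\theta}u\|_{L^{z_{1}}}$ with $0\le k\le j-1$ by $\|D^{j}_{\theta}u\|_{L^{z_{1}}}+\|u\|_{L^{1}}$; alternatively, one can invoke a Gagliardo--Nirenberg interpolation on $\mathbb{S}^{N-1}$ combined with Young's inequality to absorb the intermediate norms. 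The borderline case $z_{1}=(N-1)/j$ is explicitly excluded, so one avoids the well-known failure of the $L^{\infty}$ embedding at the critical exponent. The main obstacle is really only bookkeeping---keeping track of how derivatives of $\eta_{i}$ produce lower-order terms and how to discharge them---but no genuine analytical difficulty arises, since the lemma is just the transplant of classical Euclidean Sobolev theory to a smooth compact manifold.
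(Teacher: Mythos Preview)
Your argument is correct and is exactly the standard way one establishes Sobolev embeddings on a smooth compact manifold: localize via charts and a partition of unity, invoke the Euclidean Gagliardo--Nirenberg--Sobolev (or Morrey) inequality on $\mathbb{R}^{N-1}$, and then discharge the intermediate seminorms by Poincar\'e/interpolation on the compact manifold so that only the top-order derivative and the $L^{1}$ norm survive on the right.

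As for comparison with the paper: there is nothing to compare, because the paper does not supply a proof. It simply declares the lemma ``elementary'' and refers the reader to \cite{CHL16,Souplet09,SZ96}. Your write-up is therefore strictly more than what the paper provides, and it is the argument one would find (or reconstruct) from those references.
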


\begin{lemma}[Basic elliptic and interpolation inequalities]\label{interpolation}
Let $1 < k < \infty$ and $R > 0$. For $u \in W^{2,k}(B_{2R}(0))$, there exists a positive constant $C = C(k,N)$ such that
\begin{equation}\label{interpolation1}
 \|D_{x}^{2}u\|_{L^{k}(B_{R}(0))} \leq C\Big( \|\Delta u\|_{L^{k}(B_{2R}(0))} + R^{\frac{N}{k} - (N+2)}\|u\|_{L^{1}(B_{2R}(0))} \Big)
 \end{equation}
and
\begin{equation}\label{interpolation2}
 \|D_{x}u\|_{L^{1}(B_{R}(0))} \leq C\Big( R^{N(1- 1/k) + 1}\|D_{x}^{2} u\|_{L^{k}(B_{2R}(0))} + R^{-1}\|u\|_{L^{1}(B_{2R}(0))} \Big).
 \end{equation}
\end{lemma}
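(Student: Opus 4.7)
The plan is to derive both estimates by combining classical local elliptic regularity and Gagliardo--Nirenberg interpolation on the unit ball with a scaling argument. Setting $v(y) := u(Ry)$, one has the scaling identities
\[
\|v\|_{L^p(B_s)} = R^{-N/p}\|u\|_{L^p(B_{sR})}, \quad \|D_y^j v\|_{L^p(B_s)} = R^{j-N/p}\|D_x^j u\|_{L^p(B_{sR})},
\]
and likewise $\|\Delta_y v\|_{L^p(B_s)} = R^{2-N/p}\|\Delta_x u\|_{L^p(B_{sR})}$. These will produce the factors $R^{N/k-(N+2)}$ and $R^{N(1-1/k)+1}$ appearing in \eqref{interpolation1} and \eqref{interpolation2} mechanically once the inequalities are proved for $R=1$.

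For \eqref{interpolation1} I would start from the standard interior Calder\'on--Zygmund estimate, valid for all $1<k<\infty$,
\[
\|D^2 v\|_{L^k(B_1)} \leq C\bigl(\|\Delta v\|_{L^k(B_2)} + \|v\|_{L^k(B_2)}\bigr),
\]
which is obtained by multiplying $v$ by a cutoff supported in $B_2$ and invoking the $L^k$-boundedness of Riesz transforms on the resulting Newtonian potential representation. The $L^k$ term on the right is then reduced to an $L^1$ term via the Gagliardo--Nirenberg inequality $\|v\|_{L^k(B_2)} \leq \varepsilon \|D^2 v\|_{L^k(B_4)} + C_\varepsilon \|v\|_{L^1(B_4)}$, the $\varepsilon$-term being absorbed into the left-hand side after a routine iteration on nested balls that shrinks the outer radius back to $2$. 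Rescaling $v \leftrightarrow u$ then delivers \eqref{interpolation1}, with the exponent $N/k-(N+2) = (N/k-2) - N$ arising as the difference between the scalings of $\|D^2 u\|_{L^k(B_R)}$ and $\|u\|_{L^1(B_{2R})}$.

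For \eqref{interpolation2}, the corresponding unit-ball inequality
\[
\|D_y v\|_{L^1(B_1)} \leq C\bigl(\|D_y^2 v\|_{L^k(B_2)} + \|v\|_{L^1(B_2)}\bigr)
\]
is a direct Gagliardo--Nirenberg interpolation on a bounded Lipschitz domain: after H\"older's inequality replaces $\|D_y v\|_{L^1(B_1)}$ with $\|D_y v\|_{L^k(B_1)}$ (up to a volume constant), one interpolates $\|D w\|_{L^k} \leq \varepsilon\|D^2 w\|_{L^k} + C_\varepsilon\|w\|_{L^k}$ and then further $\|w\|_{L^k} \leq \varepsilon\|D^2 w\|_{L^k} + C_\varepsilon\|w\|_{L^1}$ on nested balls. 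Substituting the scaling identities and multiplying through by $R^{N-1}$ yields the coefficients $R^{N-1+(2-N/k)} = R^{N(1-1/k)+1}$ and $R^{N-1-N} = R^{-1}$ exactly as stated.

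The main obstacle, such as it is, is simply the careful bookkeeping of exponents under dilation and verifying that the constants produced by the unit-ball versions of Calder\'on--Zygmund theory and Gagliardo--Nirenberg interpolation depend only on $k$ and $N$, as the statement requires. Both ingredients are classical, which is why the author labels the lemma elementary and cites \cite{CHL16,Souplet09,SZ96}; no genuinely new estimate is needed.
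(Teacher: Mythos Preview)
Your approach is correct and is precisely the standard one: interior Calder\'on--Zygmund estimates combined with Gagliardo--Nirenberg interpolation on the unit ball, followed by scaling. The paper does not actually supply a proof of this lemma; it simply declares it elementary and refers to \cite{CHL16,Souplet09,SZ96}, where the same scaling-plus-interpolation argument you outline can be found. Your bookkeeping of the exponents under the dilation $v(y)=u(Ry)$ is accurate, and the absorption of the $\varepsilon\|D^2 v\|_{L^k}$ term via iteration over nested balls is the usual device, so nothing is missing.
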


\begin{lemma}[Scaling invariance]\label{scaling invariance}
Let $u$ be a positive solution of equation \eqref{LE} with $\Omega = \mathbb{R}^N$, and assume that $\|u\|_{L^{k}(B_{1}(0))}^k \leq C(k,N,p)$ for some $1 \leq k < \infty$. Then
\begin{equation}\label{dilated Lq energy}
\int_{B_{R}(0)} u^{k} \,dx \leq C(k,N,p)R^{N-k\frac{2}{p-1}}.
\end{equation}
\end{lemma}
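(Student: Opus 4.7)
The plan is to exploit the natural scale invariance of the semilinear equation $\Delta u + u^p = 0$. If $u$ is a positive entire solution, then for any $\lambda > 0$ the rescaled function
\begin{equation*}
u_\lambda(x) := \lambda^{2/(p-1)} u(\lambda x)
\end{equation*}
is again a positive entire solution of the same equation on $\mathbb{R}^N$. This follows from a direct computation: the choice of the exponent $2/(p-1)$ is precisely what balances the two powers of $\lambda$ coming from the Laplacian ($\lambda^{2/(p-1)+2}$) against those coming from the nonlinearity ($\lambda^{2p/(p-1)}$), and these agree since $2/(p-1)+2 = 2p/(p-1)$.

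Next, I would apply the standing hypothesis to the rescaled solution $u_\lambda$: since the bound $\|v\|_{L^k(B_1(0))}^k \leq C(k,N,p)$ is assumed to hold for every positive entire solution $v$ of \eqref{LE} with a constant depending only on $k$, $N$, $p$, it applies in particular with $v = u_\lambda$, giving
\begin{equation*}
\int_{B_1(0)} u_\lambda(x)^k \, dx \leq C(k,N,p).
\end{equation*}
The substitution $y = \lambda x$ then converts the left-hand side into
\begin{equation*}
\lambda^{2k/(p-1) - N} \int_{B_\lambda(0)} u(y)^k \, dy,
\end{equation*}
and setting $R = \lambda$ and rearranging gives exactly the claimed inequality \eqref{dilated Lq energy}.

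This is essentially a one-step scaling argument, so I do not anticipate a genuine obstacle; the only point that requires attention is that the constant $C(k,N,p)$ in the hypothesis must be uniform across positive solutions (not depend on the particular $u$), which is what allows its application to $u_\lambda$ with the same constant. The dependence of the exponent $N - 2k/(p-1)$ on $k$ is the source of the useful feature of this estimate: once $k > N(p-1)/2$, the right-hand side decays as $R \to 0$ and grows sublinearly in the energy at large $R$, which is precisely what is needed to close the feedback argument sketched in the introduction.
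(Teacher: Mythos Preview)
Your proof is correct and follows essentially the same route as the paper's: both verify the scaling invariance $u_\lambda(x)=\lambda^{2/(p-1)}u(\lambda x)$, apply the uniform bound on $B_1(0)$ to the rescaled solution, and change variables with $\lambda=R$. One minor slip in your closing commentary (which is not part of the proof proper): when $k>N(p-1)/2$ the exponent $N-2k/(p-1)$ is negative, so the right-hand side of \eqref{dilated Lq energy} decays as $R\to\infty$, not as $R\to 0$.
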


\begin{proof}
It is simple to check that equation \eqref{LE} is scaling invariant under the transformation $$u_{\lambda}(x) = \lambda^{\frac{2}{p-1}}u(\lambda x), ~\, \lambda > 0,$$
i.e., for any $\lambda > 0$, $u_{\lambda}$ remains a solution provided $u$ is a solution. Then, by a change of variables and scaling with $\lambda = R$, we obtain 
\begin{align*}
\int_{B_{R}(0)} u(x)^{k} \,dx = {} & \int_{B_{1}(0)} u(Ry)^k R^N \,dy = R^{N - k\frac{2}{p-1}} \int_{B_{1}(0)} u_{R}(y)^k \,dy \\
\leq {} & C(k,N,p)R^{N - k\frac{2}{p-1}}.
\end{align*}

\end{proof} 

\subsection{A local integral estimate}
As discussed earlier, an important ingredient in proving the Liouville type theorem relies on a key local integral estimate. Namely, we require
\begin{proposition}\label{energy prop}
Let $N \geq 2$, $\Omega = \mathbb{R}^N$ and suppose that $u$ is any non-negative solution of equation \eqref{LE} where $1 < p < p_S$. Then there exists a $q_0 > 0$ satisfying \eqref{cond1} for which \eqref{cond2} holds.
\end{proposition}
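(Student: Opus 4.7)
The strategy is to enhance the known $L^p$ estimate \eqref{Lp} into an $L^{q_0}$ estimate for some $q_0$ just above $p$, exploiting the strict subcriticality $p<p_S$. Two preliminary reductions simplify the task. First, if $u \not\equiv 0$ then $u>0$ throughout $\mathbb{R}^N$, since $-\Delta u = u^p \geq 0$ makes $u$ superharmonic and the strong maximum principle applies. Second, a direct algebraic check shows that, for $1<p<p_S$ and $N \geq 5$, one has $p+1 > (N-1)(p-1)/2$: this is equivalent to $p < (N+1)/(N-3)$, which is itself implied by $p < p_S = (N+2)/(N-2)$. Hence it suffices to take $q_0 = p+1$ and prove the concrete estimate $\int_{B_1(0)} u^{p+1}\,dx \leq C(N,p)$.

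The central mechanism is a Moser-type bootstrap. Multiply $-\Delta u = u^p$ by $\phi = \eta^s u^\alpha$, where $\alpha>0$ is to be chosen and $\eta \in C_c^\infty(B_{2R}(0))$ is a standard cutoff with $\eta \equiv 1$ on $B_R(0)$, $|\nabla \eta| \leq C/R$, and $|\Delta \eta| \leq C/R^2$. Integration by parts, together with a Young inequality on the cross term $\int \eta^{s-1} u^\alpha \nabla u \cdot \nabla \eta$, yields a Caccioppoli-type estimate controlling $\int \eta^s u^{\alpha-1}|\nabla u|^2$. Introducing $v = u^{(\alpha+1)/2}$ and applying the Sobolev embedding to $\eta^{s/2}v$ then produces
\[
\Big(\int_{B_R(0)} u^{(\alpha+1)N/(N-2)}\,dx\Big)^{(N-2)/N} \leq C(\alpha,N,p)\Big[\int_{B_{2R}(0)} u^{p+\alpha}\,dx + R^{-2}\int_{B_{2R}(0)} u^{\alpha+1}\,dx\Big].
\]
The nonlinear term $\int u^{p+\alpha}$ is handled via H\"older interpolation between $L^{\alpha+1}$ and $L^{(\alpha+1)N/(N-2)}$: the interpolation exponent $\theta = N(p-1)/(2(p+\alpha))$ lies strictly in $(0,1)$ precisely by the subcriticality $p<p_S$ (for $\alpha$ in an appropriate range), so a Young inequality absorbs the higher norm back into the left-hand side, producing a self-improving reverse H\"older inequality. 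Iterating this estimate finitely many times, starting from $\alpha+1 = p$ where \eqref{Lp} supplies the base case, reaches $L^{p+1}$ and indeed any finite exponent we wish.

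The main obstacle, precisely the one flagged in Remark \ref{remark2}, is that when $p > N/(N-2)$ the very first iteration from $\alpha = p-1$ can fail the absorption condition $\theta<1$; working the condition out, it reduces to $p < (N-2)/(N-4)$, and there exists $p$ with $(N-2)/(N-4) \leq p < p_S$ once $N \geq 7$. The remedy I would pursue is to run the H\"older--Young absorption not on $B_{2R}$ but on sufficiently small balls $B_\rho(x_0) \subset B_2$, where the scaling invariance of Lemma \ref{scaling invariance} (or a simple absolute continuity argument applied after a preliminary reduction) can be used to make $\|u^{p-1}\|_{L^{N/2}(B_\rho(x_0))}$ small enough to be absorbed, and then to recover the $B_1$ estimate via a finite covering. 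Once the first $\varepsilon_0$ of extra integrability is gained, all subsequent iterations satisfy $\theta<1$ automatically and the full $L^{p+1}$ bound follows; the role of the strict inequality $p < p_S$ is precisely to keep the exponents admissible at every stage of this process.
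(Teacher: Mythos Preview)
Your reduction to $q_0 = p+1$ is correct, but the Moser bootstrap you outline has a genuine gap in exactly the regime you flag, and your proposed remedy is circular. The absorption step requires smallness (or at least a uniform bound) on $\|u^{p-1}\|_{L^{N/2}(B_\rho(x_0))} = \|u\|_{L^{(p-1)N/2}(B_\rho(x_0))}^{p-1}$. For $p > N/(N-2)$ --- precisely the range assumed in Remark~\ref{remark2} --- one has $(p-1)N/2 > p$, so the only a priori information available, the $L^p$ bound \eqref{Lp}, does not even guarantee that this norm is finite. Lemma~\ref{scaling invariance} applied at exponent $k = (p-1)N/2$ would give $\int_{B_\rho} u^{(p-1)N/2} \leq C\rho^{N - (p-1)N/2\cdot 2/(p-1)} = C$, a scale-invariant quantity that does not shrink as $\rho \to 0$, and in any case presupposes the very bound you are trying to establish. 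An absolute-continuity argument likewise requires $u \in L^{(p-1)N/2}_{\mathrm{loc}}$ beforehand; if you impose boundedness of $u$ as a ``preliminary reduction,'' the resulting radius $\rho$ and hence the final constant depend on $\|u\|_\infty$, violating the uniformity demanded in \eqref{cond2}.

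The paper bypasses this obstruction entirely and does not use Moser iteration. It instead invokes two alternative sources of higher integrability: (i) the Gidas--Spruck local estimate \cite{GS81} (see also Chapter~8 of \cite{QS07}), based on a Bochner-type identity tested against a cutoff, which delivers $\int_{B_1} u^{q_0} \leq C(N,p)$ directly for some $q_0 > N(p-1)/2$; or (ii) moving planes/spheres to render the solution radially decreasing about the origin, after which the integral representation yields the pointwise decay $u(x) \leq C(N,p)|x|^{-2/(p-1)}$ and hence \eqref{cond2} for any $q_0 < N(p-1)/2$. Either route supplies exactly the missing integrability that your first bootstrap step cannot produce on its own.
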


\begin{remark}\label{remarks on energy}
\mbox{ }
\begin{enumerate}[(a)] 

\item If $q_0 = p+1$, then such solutions are said to have finite energy. If $p = p_S$, then finite energy solutions have the form \eqref{classify} due to a variant of the method of moving planes adapted for integral equations \cite{CLO06}. 

\item If $q_0 = N(p-1)/2$, then $u$ is sometimes called an integrable solution. In fact, the integrable solutions are equivalent to the finite energy solutions and the classification and Liouville type theorems for such solutions have been established in \cite{CLO05a,CLO06}. If $q_0 > N(p-1)/2$ in Proposition \ref{energy prop}, then the Liouville type theorem follows directly from the scaling invariance of the equation.
\end{enumerate}

\end{remark}

\begin{proof}[Proof of Proposition \ref{energy prop}] This is already known for some $q_0 > N(p-1)/2$. The proof uses Bochner's integral formula tested carefully against a suitably chosen cut-off function, and we refer the reader to \cite{GS81} and Chapter 8 of \cite{QS07} for the details. For $q_0 \leq N(p-1)/2$, the local estimate follows from potential theory and the symmetry and monotonicity of non-negative entire solutions. Namely, we can assume entire solutions are monotone decreasing about the origin by either moving plane \cite{CL91} or moving sphere \cite{YYLiZhu95,LiZh03} arguments. Then, by the integral representation of solutions, we deduce that
$$ |x|u(x)^{(p-1)/2} \leq C(N,p), ~ x \in B_{1}(0),$$
since
\begin{align*}
u(x) \geq {} & C\int_{\mathbb{R}^N} |x - y|^{2-N}u(y)^p \,dy \geq C \int_{B_{|x|}(0)} |x - y|^{2-N}u(y)^p \,dy \\
\geq {} & Cu(x)^p \int_{B_{|x|}(0)} (1 + |y|^2)^{(2-N)/2} \,dy \geq C u(x)^p |x|^2.
\end{align*}
This implies 
$$ \int_{B_{1}(0)} u^{\gamma(p-1)/2} \,dx \leq C(N,p) < \infty$$
provided that $\gamma \leq N$. We choose $q_0 = \gamma(p-1)/2$ with $\gamma \in (N-1, N]$ so that \eqref{cond1} and \eqref{cond2} are satisfied. This completes the proof.

\end{proof}

\begin{remark}
It would be more interesting, however, to obtain the local integral estimate without relying on the monotonicity properties of positive solutions to equation \eqref{LE}.
\end{remark}

\subsection{Intermediate inequalities for estimating $G_{1}(R)$ and $G_{2}(R)$}

Let $u$ be a positive solution of equation \eqref{LE} in $\Omega = \mathbb{R}^N$. We set $\tau = 2/(p-1)$ and $k = (p+1)/p$. Due to Proposition \ref{energy prop} and Remarks \ref{remark2} and \ref{remarks on energy}, we may choose $q \in (p,q_0)$ and $\ell > 1$ such that 
$$(N-1)(p-1)/2 < q := \ell p < q_0 = p + \varepsilon_0 < p+1.$$ 
Therefore, $\ell < k$ and 
\begin{equation}\label{q energy}
\int_{B_{1}(0)} u^q \,dx \leq C(N,p). 
\end{equation}
Recalling \eqref{spherical norm}, we may write 
\[ \|u\|_{L^{s}(B_{R}(0))}^{s} = \int_{0}^{R} \|u(r)\|_{s}^{s} r^{N-1} \,dr.\]

\begin{lemma}\label{ball estimates}
For large $R \geq 1$ and all suitably small $\epsilon \in (0, \varepsilon_0)$, the following hold:
\begin{enumerate}[(a)]
\item $\|u\|_{L^{1}(B_{R}(0) )} \leq CR^{N-\tau}$,
\item $\|D_{x} u\|_{L^{1}(B_{R}(0) )} \leq CR^{N + 1 - p\tau}$,
\item $\|D_{x}^2 u\|_{L^{\ell + \epsilon}(B_{R}(0) )}^{\ell + \epsilon} \leq CR^{N - q\tau }$,
\item $\|D_{x}^{2} u\|_{L^{k}(B_{R}(0) )}^{k} \leq C F(2R)$.
\end{enumerate}
\end{lemma}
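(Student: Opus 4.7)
The plan is to leverage Proposition~\ref{energy prop} (the local $L^{q_0}$ bound) together with the scaling invariance of Lemma~\ref{scaling invariance} and the elliptic/interpolation inequalities of Lemma~\ref{interpolation}. Part (a) is immediate: since $q_0 > 1$, H\"older's inequality applied to \eqref{cond2} yields $\|u\|_{L^{1}(B_{1}(0))} \leq C$, and invoking Lemma~\ref{scaling invariance} with $k=1$ then gives $\|u\|_{L^{1}(B_{R}(0))} \leq C R^{N-\tau}$.

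Parts (c) and (d) both follow from the Calder\'on--Zygmund inequality \eqref{interpolation1}. For (c), I would apply it with exponent $\ell+\epsilon$, where $\epsilon\in(0,\varepsilon_{0})$ is small enough that $p(\ell+\epsilon) = q+p\epsilon < q_{0}$. Using $\Delta u = -u^{p}$, the principal contribution becomes $\|u\|_{L^{q+p\epsilon}(B_{2R})}^{p(\ell+\epsilon)}$, which Lemma~\ref{scaling invariance} bounds by $CR^{N-(q+p\epsilon)\tau}$; the lower-order term, evaluated via part (a), contributes a quantity of the same order because $(\ell+\epsilon)p\tau = (q+p\epsilon)\tau$. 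Discarding the harmless $p\epsilon\tau$ correction then gives the stated bound. For (d), apply the same interpolation with $k=(p+1)/p$, so that $\|\Delta u\|_{L^{k}(B_{2R})}^{k} = \int_{B_{2R}} u^{p+1}\,dx = F(2R)$, while the lower-order term, via (a), is bounded by $CR^{N-(p+1)\tau}$. The subcritical hypothesis $p<p_{S}$ (recalled in Remark~\ref{remark2}) forces $N-(p+1)\tau < 0$, so this tail is uniformly bounded for $R\geq 1$ and is absorbed into $CF(2R)$ since $F$ is positive and nondecreasing.

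For part (b), I would apply the first-order interpolation \eqref{interpolation2} with exponent $\ell+\epsilon$. The $L^{1}$ contribution via (a) gives $R^{-1}\cdot R^{N-\tau} = R^{N+1-p\tau}$ thanks to the identity $p\tau = \tau+2$, while the $L^{\ell+\epsilon}$-norm of $D_{x}^{2}u$ bounded by (c) produces a term of order $R^{N+1-q\tau/(\ell+\epsilon)}$, whose exponent differs from $N+1-p\tau$ by an arbitrarily small positive amount once $\epsilon$ is chosen small. The main obstacle throughout is calibrating the pair $(\ell,\epsilon)$: one must simultaneously keep $p(\ell+\epsilon)<q_{0}$ so that Lemma~\ref{scaling invariance} applies in (c), and keep the resulting exponents close enough to $p\tau$ in (b) so that the estimates are still usable in the feedback argument of the main theorem. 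With Proposition~\ref{energy prop} providing the initial $L^{q_{0}}$ bound, this calibration can be done, and the remainder of the proof is routine bookkeeping with Lemmas~\ref{interpolation} and~\ref{scaling invariance}.
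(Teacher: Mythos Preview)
Your approach is essentially the same as the paper's: H\"older plus scaling for (a), and the Calder\'on--Zygmund/interpolation inequalities of Lemma~\ref{interpolation} combined with $\Delta u=-u^{p}$ and Lemma~\ref{scaling invariance} for (c) and (d). The only substantive difference is in (b): the paper applies \eqref{interpolation1}--\eqref{interpolation2} with exponent $1+\epsilon$ rather than $\ell+\epsilon$, which keeps the arithmetic slightly cleaner but is not essential.

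There is, however, a small bookkeeping slip in your treatment of (b). You feed the \emph{relaxed} bound $\|D_{x}^{2}u\|_{L^{\ell+\epsilon}}^{\ell+\epsilon}\leq CR^{N-q\tau}$ from (c) into \eqref{interpolation2}, which yields the exponent $N+1-q\tau/(\ell+\epsilon)$. Since $q/(\ell+\epsilon)=\ell p/(\ell+\epsilon)<p$, this exponent is strictly larger than $N+1-p\tau$, so for $R\geq 1$ you obtain a bound \emph{weaker} than the one stated in (b); it does not prove the lemma as written. The fix is already in your own computation for (c): you noted that the sharp intermediate estimate is $CR^{N-(q+p\epsilon)\tau}=CR^{N-p(\ell+\epsilon)\tau}$. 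Using that instead gives $\|D_{x}^{2}u\|_{L^{\ell+\epsilon}}\leq CR^{N/(\ell+\epsilon)-p\tau}$, and then
\[
R^{N(1-1/(\ell+\epsilon))+1}\cdot R^{N/(\ell+\epsilon)-p\tau}=R^{N+1-p\tau},
\]
which is exactly (b). In short, do not throw away the $p\epsilon\tau$ correction in (c) \emph{before} plugging into (b); otherwise your route and the paper's coincide.
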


\begin{proof}
\noindent (a) By H\"{o}lder's inequality, taking $k = q$ in Lemma \ref{scaling invariance} and combining it with estimate \eqref{q energy}, we get
\begin{equation*}
\int_{B_{R}(0) } u \,dx \leq \Big( \int_{B_{R}(0)} 1 \,dx \Big)^{1 - 1/q} \Big( \int_{B_{R}(0)} u^q \,dx \Big)^{1/q} 
\leq C R^{N(1-1/q)} R^{(N - q\tau )/q}  = CR^{N - \tau }.
\end{equation*}

\noindent (b) From estimate \eqref{interpolation1} of Lemma \ref{interpolation} combined with \eqref{q energy} and Lemma \ref{scaling invariance}, we get

\begin{align*}
\|D_{x}^2 u\|_{L^{1+ \epsilon}(B_{R}(0))} \leq {} & C( \|\Delta u\|_{L^{1+\epsilon}(B_{2R}(0))} + R^{\frac{N}{1+\epsilon} - (N+2)}\|u\|_{L^{1}(B_{2R}(0))} ) \\
\leq {} & C( \|u\|_{L^{p(1+\epsilon)}(B_{2R}(0))}^p + R^{\frac{N}{1+\epsilon} - (N+2)}\|u\|_{L^{1}(B_{2R}(0))} ) \\
\leq {} & C( (R^{N - p(1+\epsilon)\tau})^{\frac{p}{p(1+\epsilon)} } + R^{\frac{N}{1+\epsilon} - (N+2)}\|u\|_{L^{1}(B_{2R}(0))} ) \\
\leq {} & C( R^{\frac{N}{1+\epsilon} - p\tau } + R^{\frac{N}{1+\epsilon} - (N+2)}\|u\|_{L^{1}(B_{R}(0))} ).
\end{align*}

Inserting this into estimate \eqref{interpolation2} of Lemma \ref{interpolation} and invoking the estimate of part (a), we get
\begin{align*}
\|D_{x}u\|_{L^{1}(B_{R}(0) )} \leq {} & C( R^{N(1-\frac{1}{1+\epsilon}) + 1}\|D^2 u\|_{L^{1+\epsilon}(B_{2R}(0) )} + R^{-1}\|u\|_{L^{1}(B_{2R}(0) )} ) \\
\leq {} & C ( R^{N(1-\frac{1}{1+\epsilon}) + 1} ( R^{\frac{N}{1+\epsilon} - p\tau } + R^{\frac{N}{1+\epsilon} - (N+2)}\|u\|_{L^{1}(B_{2R}(0))} ) + R^{-1}\|u\|_{L^{1}(B_{2R}(0) )}  ) \\
\leq {} & C ( R^{-1 + N - \tau} + 2R^{-1}\|u\|_{L^{1}(B_{2R}(0) )}  ) \leq C(R^{N+1 - (2 +\tau)} + R^{N + 1 - p\tau } ) \\
\leq {} & C R^{N + 1 - p\tau }.
\end{align*}

\noindent (c) With $k = \ell + \epsilon$ in estimate \eqref{interpolation1} of Lemma \ref{interpolation}, we have 
\begin{align}\label{lemma4c1}
\|D_{x}^2 u\|_{L^{\ell + \epsilon}(B_{R}(0) )}^{\ell + \epsilon} \leq {} & C( \|\Delta u\|_{L^{\ell + \epsilon}(B_{2R}(0) )}^{\ell + \epsilon} + R^{N-(\ell + \epsilon)(N+2)}\|u\|_{L^{1}(B_{2R}(0) )}^{\ell + \epsilon} ) \notag \\
\leq {} & C( \int_{B_{2R}(0)} u^{p(\ell + \epsilon)}\,dx + R^{N-(\ell + \epsilon)(N+2)}R^{(\ell + \epsilon)(N-\tau)} ). 
\end{align}
Since $q < p(\ell + \epsilon) < q_0$ for sufficiently small $\epsilon$, interpolation ensures that
$$\|u\|_{L^{p(\ell + \epsilon)} (B_{1}(0) )}^{p(\ell + \epsilon)} \leq C(N,p).$$ 
Thus, combining this with Lemma \ref{scaling invariance} and inserting the resulting estimate into \eqref{lemma4c1} yields
\begin{equation*}
\|D_{x}^2 u\|_{L^{\ell + \epsilon}(B_{R}(0) )}^{\ell + \epsilon} \leq C(R^{N-p(\ell + \epsilon)\tau} + R^{N-(\ell + \epsilon)(2+\tau)} ) \leq CR^{N-p(\ell + \epsilon)\tau} \leq CR^{N-q\tau}.
\end{equation*}

\noindent (d) By Lemma \ref{interpolation}, part (a), and the fact that $N -(p+1)\tau < 0$ due to $p < p_S$, we obtain
\begin{align*}
\|D_{x}^2 u\|_{L^{k}(B_{R}(0) )}^{k} \leq {} & C \Big( \int_{B_{2R}(0) } |\Delta u|^k \,dx + R^{N-k(N+2)}( \int_{B_{2R}(0) } u\,dx )^{k} \Big) \\
\leq {} & C\Big (  \int_{B_{2R}(0) } u^{pk} \,dx + R^{N-k(N+2)}R^{k(N-\tau)} \Big ) \\
\leq {} & C( F(2R) + R^{N- (2+\tau)k)} \leq C( F(2R) + R^{ N - (p\tau)k } ) \\
\leq {} & C( F(2R) + R^{N - (p+1)\tau} ) \leq C F(2R).
\end{align*}
This completes the proof of the lemma.
\end{proof}

With the help of Lemma \ref{ball estimates}, we establish

\begin{proposition}\label{measure feedback}
For each large $R \geq 1$ and for all suitably small $\epsilon \in (0, \varepsilon_0)$, there exists $\tilde{R} \in (R,2R)$ such that the following hold:
\begin{enumerate}[(a)]
\item $\|u(\tilde{R})\|_{1} \leq CR^{-\tau}$,
\item $\|D_{x}u(\tilde{R})\|_{1} \leq CR^{1 - p\tau }$,
\item $\|D_{x}^2 u(\tilde{R})\|_{\ell + \epsilon } \leq CR^{- \frac{q\tau}{\ell + \epsilon} }$,
\item $\|D_{x}^2 u(\tilde{R})\|_{k} \leq C \Big( R^{-N} F(2R) \Big )^{\frac{1}{k} }$.
\end{enumerate}
\end{proposition}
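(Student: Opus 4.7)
The plan is to deduce all four spherical inequalities at a single radius $\tilde R \in (R, 2R)$ by a pigeonhole (``averaging'') argument applied to the ball estimates of Lemma \ref{ball estimates}. The key identity is that, for any $1 \leq s < \infty$ and any sufficiently regular $f$,
$$\int_R^{2R} \|f(r)\|_s^{s}\, r^{N-1}\, dr \;=\; \|f\|_{L^s(B_{2R}(0)\setminus B_R(0))}^{s} \;\leq\; \|f\|_{L^s(B_{2R}(0))}^{s},$$
which converts each ball bound of Lemma \ref{ball estimates}, applied on $B_{2R}$, into an averaged statement for the corresponding spherical norm on the annulus $r \in (R, 2R)$. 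Since the desired inequalities are pointwise in $r$, it suffices to exhibit one ``good'' radius $\tilde R \in (R, 2R)$ lying outside four small ``bad'' sets.

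Concretely, I fix a parameter $K \geq 1$ (to be chosen large) and define
\begin{align*}
E_a &:= \{ r \in (R, 2R) : \|u(r)\|_1 > K R^{-\tau}\}, \\
E_b &:= \{ r \in (R, 2R) : \|D_x u(r)\|_1 > K R^{1 - p\tau}\}, \\
E_c &:= \{ r \in (R, 2R) : \|D_x^2 u(r)\|_{\ell+\epsilon}^{\ell+\epsilon} > K R^{-q\tau}\}, \\
E_d &:= \{ r \in (R, 2R) : \|D_x^2 u(r)\|_{k}^{k} > K R^{-N} F(4R)\}.
\end{align*}
Using the identity above, the trivial bound $r \geq R$ on the annulus, and the corresponding part of Lemma \ref{ball estimates} applied with $R$ replaced by $2R$, a Chebyshev-type comparison gives $|E_j| \leq C K^{-1} R$ for each $j \in \{a, b, c, d\}$. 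Choosing $K$ sufficiently large forces $|E_a| + |E_b| + |E_c| + |E_d| < R = |(R,2R)|$, so some $\tilde R \in (R, 2R) \setminus \bigcup_j E_j$ exists; extracting the $(\ell + \epsilon)$-th root in (c) and the $k$-th root in (d) recovers the stated form of the proposition.

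Two small bookkeeping points deserve mention. The ball estimates of Lemma \ref{ball estimates} are stated on $B_R$, and I apply them on $B_{2R}$; this only affects the absolute constants in (a)--(c), but in (d) it naturally produces $F(4R)$ in place of $F(2R)$. Since the eventual feedback/contradiction argument is to be run along a dyadic sequence $R = R_j \to \infty$, the harmless shift $F(2R) \leftrightarrow F(4R)$ is absorbed into the constant $C$, matching the stated form. I do not anticipate any genuine obstacle here: Proposition \ref{measure feedback} is essentially an immediate measure-theoretic corollary of Lemma \ref{ball estimates}, with all the serious analytic content having already been accomplished there.
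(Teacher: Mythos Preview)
Your proof is correct and follows essentially the same pigeonhole/Chebyshev argument as the paper, which defines the identical bad sets (there called $\Gamma_i(R)$), bounds each of their measures by $CR/K$ via Lemma~\ref{ball estimates}, and chooses $K$ large. Your remark about $F(4R)$ versus $F(2R)$ is well taken: the paper simply writes $F(2R)$ in its definition of $\Gamma_4$ without comment, glossing over precisely the bookkeeping point you flag.
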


\begin{proof}
Fix a large $R \geq 1$. Given a constant $K > 0$, set
\begin{align*}
\Gamma_{1}(R) = {} & \Big \lbrace r \in (R,2R) \, : \, \|u(r)\|_{1} > KR^{-\tau} \Big\rbrace, \\
\Gamma_{2}(R) = {} & \Big \lbrace r \in (R,2R) \, : \, \|D_{x}u(r)\|_{1 } > KR^{1 - p\tau } \Big\rbrace, \\
\Gamma_{3}(R) = {} & \Big \lbrace r \in (R,2R) \, : \, \|D_{x}^2 u(r)\|_{\ell + \epsilon}^{ \ell + \epsilon} > KR^{-q\tau } \Big\rbrace, \\
\Gamma_{4}(R) = {} & \Big \lbrace r \in (R,2R) \, : \, \|D_{x}^2 u(r)\|_{k}^k > KR^{-N}F(2R) \Big\rbrace. 
\end{align*} 
We claim that we can choose a large $K$ independent of $R$ such that
\[ \Gamma := (R,2R) \backslash \bigcup_{i=1}^{4} \Gamma_{i}(R) \neq \emptyset. \]
Then it follows that we can find $\tilde{R} \in (R,2R)$ satisfying the estimates stated in the proposition. Therefore, It only remains to prove the claim.
From part (a) of Lemma \ref{ball estimates}, we obtain $|\Gamma_{1}(R)| \leq CR/K$ since
\begin{align*}
C \geq {} & R^{\tau - N}\|u\|_{L^{1}(B_{2R}(0))} = R^{\tau - N} \int_{0}^{2R} \|u(r,\theta)\|_{L^{1}(\mathbb{S}^{N-1})} r^{N-1} \,dr \\
\geq {} & R^{\tau - N}|\Gamma_{1}(R)|R^{N-1}KR^{-\tau} \geq KR^{-1}|\Gamma_{1}(R)|.
\end{align*}
From part (b) of Lemma \ref{ball estimates}, we obtain $|\Gamma_{2}(R)| \leq CR/K$ since
\begin{equation*}
C \geq R^{ p\tau - N - 1} \|D_{x}u\|_{L^{1}(B_{2R}(0))} \geq R^{ p\tau - N - 1} |\Gamma_{2}(R)| R^{N-1}KR^{1 - p\tau } \geq KR^{-1}|\Gamma_{2}(R)|.
\end{equation*}
In a similar manner, we can show that $|\Gamma_{3}(R)|, |\Gamma_{4}(R)| \leq CR/K$ from parts (c) and (d) of Lemma \ref{ball estimates}. By choosing $K \geq 8C$, we get that $|\Gamma_{i}(R)| \leq R/8$ for $i = 1,2,3,4$ and thus 
$$\bigcup_{i=1}^{4} \Gamma_{i}(R) \leq R/2.$$
Hence, $| \Gamma | \geq |(R,2R)| - R/2 = R/2$ and $\Gamma$ is non-empty. 
\end{proof}


\subsection{Proof of Theorem \ref{rough liouville theorem}}
\begin{proof} Let $u$ be a positive solution of \eqref{LE} in $\Omega = \mathbb{R}^N$, and choose $q$ and $\ell$ as before. If $q > N(p-1)/2$, then we are done since Lemma \ref{scaling invariance} implies $u \equiv 0$ after sending $R \rightarrow \infty$ in \eqref{dilated Lq energy}. So hereafter, we assume $q \leq N(p-1)/2$. Moreover, note that \eqref{subcritical range} implies that
$$\ell < k = 1 + 1/p < 1 + (N-2)/N = 2(N-1)/N \leq (N-1)/2$$
so that
$$ \frac{1}{\ell} > \frac{1}{k} > \frac{2}{N-1}.$$

\subsubsection*{Step 1: Estimation of $G_{1}(R)$.}

\noindent Case 1: $\displaystyle \frac{1}{\ell} < \frac{2}{N-1} + \frac{1}{1+p}$.

\medskip
Indeed
$$\frac{1}{\ell} - \frac{2}{N-1} < \frac{1}{1+p},$$
and the Sobolev embedding implies that 
$$ W^{2,\ell + \epsilon}(\mathbb{S}^{N-1}) \hookrightarrow L^{p+1}(\mathbb{S}^{N-1})$$
where 
$$ \|u(R)\|_{p+1} \leq C\Big( R^2 \|D_{x}^2 u(R)\|_{\ell + \epsilon} + \|u(R)\|_{1}\Big).$$
Hence, Proposition \ref{measure feedback} implies the existence of an $\tilde{R} \in (R,2R)$ such that
\begin{align*}
G_{1}(\tilde{R}) \leq {} & C\tilde{R}^{N}\Big( \tilde{R}^2\|D_{x}^2 u(\tilde{R})\|_{\ell + \epsilon} + \|u(\tilde{R})\|_{1} \Big)^{p+1} \\
\leq {} & C R^{N}\Big( R^{2 - \frac{q\tau}{\ell + \epsilon}} + R^{-\tau} \Big)^{p+1} \leq C R^{(p+1)(2 - \frac{q\tau}{\ell + \epsilon} + \frac{N}{p+1})} \\
\leq {} & CR^{(p+1)(2 - p\tau + \frac{N}{p+1} + p\tau - \frac{q\tau}{\ell + \epsilon})} \\
\leq {} & CR^{(p+1)(-\tau + \frac{N}{p+1} + \frac{\epsilon p\tau}{\ell + \epsilon}) },
\end{align*}
where in the last line we used the fact that $2 - p\tau = -\tau$. By noticing that $p < p_S$ implies that $-\tau + N/(p+1) < 0$, we can then choose $\epsilon$ suitably small so that 
$$ G_{1}(\tilde{R}) \leq CR^{-a} \,\text{ for some }\, a > 0.$$

\noindent Case 2: $\displaystyle \frac{1}{\ell} \geq \frac{2}{N-1} + \frac{1}{1+p}$.

Set
$$\frac{1}{\mu} = \frac{1}{k} - \frac{2}{N-1}$$
and 
$$\frac{1}{\lambda} = \frac{1}{\ell} - \frac{2}{N-1} \geq \frac{1}{1+p},$$
and so
$$\frac{1}{\mu} = \frac{1}{k} - \frac{2}{N-1}\leq \frac{1}{1+p},$$
i.e., $1/\mu \leq 1/(1+p) \leq 1/\lambda$. In view of the Sobolev embeddings
$$ W^{2,\ell + \epsilon}(\mathbb{S}^{N-1}) \hookrightarrow L^{\lambda}(\mathbb{S}^{N-1}) \,\text{ and }\, W^{2,k}(\mathbb{S}^{N-1}) \hookrightarrow L^{\mu}(\mathbb{S}^{N-1}),$$
interpolation ensures there is some $\theta \in [0,1]$ with 
$$\frac{1}{1+p} = \frac{\theta}{\lambda} + \frac{1-\theta}{\mu}$$
such that
\begin{align*}
\|u(R)\|_{p+1} \leq {} & \|u(R)\|_{\lambda}^{\theta}\|u(R)\|_{\mu}^{1-\theta} \\
\leq {} & C\Big( R^2 \|D_{x}^2 u(R)\|_{\ell + \epsilon} + \|u(R)\|_{1} \Big)^{\theta} \Big( R^2 \|D_{x}^2 u(R)\|_{k} + \|u(R)\|_{1} \Big)^{1-\theta}.
\end{align*}
Thus, by Proposition \ref{measure feedback}, there exists an $\tilde{R} \in (R,2R)$ such that
\begin{align*}
G_{1}(\tilde{R}) \leq {} & CR^{N}\Big( (R^{2 - \frac{q\tau}{\ell + \epsilon}} + R^{-\tau} )^{\theta} ( R^{2 - \frac{N}{k} }F(2R)^{\frac{1}{k}} + R^{-\tau})^{1-\theta} \Big)^{p+1} \\
\leq {} & CR^{[N/(1+p) + \theta(2 -  \frac{q\tau}{\ell + \epsilon}) + (2 - N/k)(1-\theta)](1+p)}F(2R)^{p(1-\theta)} \\
\leq {} & CR^{-a_{1}(\epsilon)}F(2R)^{1-b_1}, 
\end{align*}
where 
$$a_{1}(\epsilon) :=  [\theta( \frac{q\tau}{\ell + \epsilon} - 2 ) - N/(1+p) - (2 - N/k)(1-\theta)](1+p) ~ \,\text{ and }\, b_1 := 1 - p(1-\theta).$$
Direct calculations yield 
\begin{align*}
a_{1}(0) = {} & -[N/(1+p) + \theta(2 - p \tau) + (2 - N/k)(1-\theta)](1+p) \\
= {} & -[N - \theta(1+p)\tau + (2(1+p) - Np )(1-\theta)] \\
= {} & -[N - \theta(1+p)\tau + 2(1-\theta) - p(1-\theta)(N-2)] \\
= {} & -N + p \tau\theta + \tau\theta - (p\tau - \tau)(1-\theta) + p(1-\theta)(N - 2) \\
= {} & -N + p\tau\theta + \tau\theta - p\tau(1-\theta) + \tau - \tau \theta + p(1-\theta)(N-2) \\
= {} & -N + \underbrace{(2 + \tau - p\tau)}_{= \,0 } + p\tau \theta + \tau -p\tau(1-\theta) + p(1-\theta)(N-2)  \\ 
= {} & 2\tau + 2 -N - 2p(1-\theta)\tau + p(1-\theta)(N-2) \\
= {} & 2[1 - p(1-\theta)](\tau - (N-2)/2) \\
= {} & 2(\tau - (N-2)/2) b_1,
\end{align*}
where we used the fact that $2 + \tau = p\tau$ and the subcritical condition \eqref{subcritical range}, which ensures that $\tau - (N-2)/2 > 0$. Hence, $a_{1}(0) > 0$ if $b_1 > 0$ and thus $a_{1}(\epsilon) > 0$ for sufficiently small $\epsilon > 0$. Therefore, the desired estimate for this case follows once we verify that
\begin{equation}\label{b1}
b_1 > 0,\,\text{ i.e., }\, p < \frac{1}{1-\theta},
\end{equation}
and we shall see that this holds due to $q > (N-1)(p-1)/2$. 

From the fact that
$$ \frac{1}{1+p} = \frac{\theta}{\ell} + \frac{1-\theta}{k} - \frac{2}{N-1},$$
we see that $\frac{1}{p} =  1 - \theta + \theta\frac{k}{\ell} - \frac{2k}{N-1}$. Therefore, property \eqref{b1} follows immediately once we show that
$$ \frac{\theta}{\ell} > \frac{2}{N-1}.$$
Since 
$$\frac{1}{1+p} = \frac{\theta}{\lambda} + \frac{1-\theta}{\mu} = (1-\theta)\Big(\frac{1}{\mu} - \frac{1}{\lambda}\Big) + \frac{1}{\lambda},$$
we have that 
$$\frac{1}{1-\theta} \Big( \frac{1}{\lambda} - \frac{1}{1+p} \Big) = \frac{1}{\lambda} - \frac{1}{\mu} = \frac{1}{\ell} - \frac{1}{k}.$$
Thus,
$$\frac{1}{(1-\theta)p} = \frac{ \frac{1}{\ell} - \frac{1}{k}}{\frac{p}{\lambda} - \frac{1}{k}} = \frac{ \frac{p}{q} - \frac{1}{k}}{\frac{p}{\lambda} - \frac{1}{k}} > 1,$$
where the right-most inequality follows from 
$$ \frac{1}{q} > \frac{1}{\lambda} = \frac{p}{q} - \frac{2}{N-1},$$
which itself follows from the fact that $q > (N-1)(p-1)/2$. This completes the estimation of $G_{1}(R)$.

\subsubsection*{Step 2: Estimation of $G_{2}(R)$.}
For $z,z' > 1$ with $1/z + 1/z'  = 1$, H\"{o}lder's inequality, Cauchy's inequality, and the Sobolev inequality
$$R^{-1}\|u(R)\|_{z} \leq C(\|D_{x}u(R)\|_{z} + R^{-1}\|u(R)\|_{1})$$ 
imply
\begin{align}\label{G2} 
G_{2}(R) \leq {} & CR^{N} (\|D_{x}u(R)\|_{2} + R^{-1}\|u(R)\|_{2} )^2  \\
\leq {} & CR^{N} ( \|D_{x}u(R)\|_{2}^2 + 2 R^{-1}\|D_{x}u(R)\|_{2}\|u(R)\|_{2} + R^{-2}\|u(R)\|_{2}^2 ) \notag \\
\leq {} & CR^{N} ( \|D_{x}u(R)\|_{2}^2 + R^{-2}\|u(R)\|_{2}^2 ) \notag \\
\leq {} & CR^{N} ( \|D_{x}u(R)\|_{z}\|D_{x}u(R)\|_{z'} + R^{-2}\|u(R)\|_{z}\|u(R)\|_{z'} ) \notag \\
\leq {} & CR^{N}(\|D_{x}u(R)\|_{z} + R^{-1}\|u(R)\|_{z})(\|D_{x}u(R)\|_{z'} + R^{-1}\|u(R)\|_{z'}) \notag \\
\leq {} & CR^{N}(\|D_{x}u(R)\|_{z} + R^{-1}\|u(R)\|_{1})(\|D_{x}u(R)\|_{z'} + R^{-1}\|u(R)\|_{1}) \notag \\
\leq {} & CR^{N+2}(R^{-1}\|D_{x}u(R)\|_{z} + R^{-2}\|u(R)\|_{1})(R^{-1}\|D_{x}u(R)\|_{z'} + R^{-2}\|u(R)\|_{1}).  \notag
\end{align}


\noindent It is straightforward to check that we can find $z > 1$ such that
\begin{equation}\label{z}
\max\Big( \frac{1}{k} - \frac{1}{N-1}, \frac{1}{N-1} \Big) \leq \frac{1}{z} \leq \min\Big( \frac{1}{\ell} - \frac{1}{N-1}, \frac{1}{1+p} + \frac{1}{N-1} \Big),
\end{equation}
and thus its H\"{o}lder conjugate $z'$ satisfies
\[ \max \Big( \frac{1}{k} - \frac{1}{N-1}, \frac{\ell - 1}{\ell} + \frac{1}{N-1} \Big) \leq \frac{1}{z'} = 1 - \frac{1}{z} \leq \min \Big( 1 - \frac{1}{N-1}, \frac{1}{1+p} + \frac{1}{N-1} \Big ).\]
In view of this and \eqref{z}, we consider the following embeddings:
$$ W^{1,\ell + \epsilon}(\mathbb{S}^{N-1}) \hookrightarrow L^{\lambda_1}(\mathbb{S}^{N-1}), \, W^{1,k}(\mathbb{S}^{N-1}) \hookrightarrow L^{\mu}(\mathbb{S}^{N-1}), $$
$$ W^{1,1+\epsilon}(\mathbb{S}^{N-1}) \hookrightarrow L^{\lambda_2}(\mathbb{S}^{N-1}),$$
where
$$ \frac{1}{\lambda_1} = \frac{1}{\ell} - \frac{1}{N-1}, \,~\, \frac{1}{\mu} = \frac{1}{k} - \frac{1}{N-1} \,\text{ and }\, \frac{1}{\lambda_2} = 1 - \frac{1}{N-1},$$
and $\ell$ and $k$ are defined as before. Interpolation ensures there exist
$\theta_1,\theta_2 \in [0,1]$ with
\begin{equation}\label{theta}
\frac{1}{z} = \frac{\theta_1}{\lambda_1} + \frac{1-\theta_1}{\mu} \,\text{ and }\, \frac{1}{z'} = \frac{\theta_2}{\lambda_2} + \frac{1-\theta_2}{\mu}
\end{equation}
such that
\begin{align}\label{estimate1}
\|D_{x}u(R)\|_{z} \leq {} & \|D_{x}u(R)\|_{\lambda_1}^{\theta_1} \|D_{x}u(R)\|_{\mu}^{1-\theta_1}  \\
\leq {} & C\Big(R\|D_{x}^{2}u(R)\|_{\ell + \epsilon} + \|D_{x}u(R)\|_{1}\Big)^{\theta_1} \Big( R\|D_{x}^{2}u(R)\|_{k} + \|D_{x}u(R)\|_{1} \Big)^{1-\theta_1}, \notag
\end{align}
\begin{align}\label{estimate2}
\|D_{x}u(R)\|_{z'} \leq {} & \|D_{x}u(R)\|_{\lambda_2}^{\theta_2} \|D_{x}u(R)\|_{\mu}^{1-\theta_2}  \\
\leq {} & C\Big(R\|D_{x}^{2}u(R)\|_{1 + \epsilon} + \|D_{x}u(R)\|_{1}\Big)^{\theta_2} \Big( R\|D_{x}^{2}u(R)\|_{k} + \|D_{x}u(R)\|_{1} \Big)^{1-\theta_2}. \notag
\end{align}
Inserting estimates \eqref{estimate1} and \eqref{estimate2} into \eqref{G2} yields
\begin{align*}
G_{2}(R) \leq {} & CR^{N+2}\Big( \|D_{x}^2 u(R)\|_{\ell + \epsilon} + R^{-1}\|D_{x}u(R)\|_{1} + R^{-2}\|u(R)\|_{1} \Big)^{\theta_1} \\
{} & \times \Big( \|D_{x}^2 u(R)\|_{k} + R^{-1}\|D_{x}u\|_{1} + R^{-2}\|u(R)\|_{1} \Big)^{1-\theta_1} \\
{} & \times \Big( \|D_{x}^2 u(R)\|_{1 + \epsilon} + R^{-1}\|D_{x}u(R)\|_{1} + R^{-2}\|u(R)\|_{1} \Big)^{\theta_2} \\
{} & \times \Big( \|D_{x}^2 u(R)\|_{k} + R^{-1}\|D_{x}u(R)\|_{1} + R^{-2}\|u(R)\|_{1} \Big)^{1-\theta_2}. 
\end{align*}
Thus, by Proposition \ref{measure feedback}, there exists an $\tilde{R} \in (R,2R)$ such that
\begin{align*}
G_{2}(\tilde{R}) \leq {} & CR^{N+2 - \frac{p\tau}{1 + \epsilon}(\theta_1 + \theta_2) - \frac{N}{k}(2-\theta_1 - \theta_2)}F(2R)^{\frac{2-\theta_1 - \theta_2}{k}} \\
\leq {} & C R^{-a_{2}(\epsilon)} F(2R)^{1- b_2},
\end{align*}
where 
$$a_{2}(\epsilon) := (\frac{p\tau}{1 + \epsilon} )(\theta_1 + \theta_2) + \frac{N}{k}(2-\theta_1 - \theta_2) - (N + 2) ~\,\text{ and }\, 1- b_2 := \frac{2-\theta_1 - \theta_2}{k}.$$
To obtain the desired estimate for this case, we shall verify that $b_2 > 0$, then we use this to show that $a_{2}(0) > 0$. Thus, $a_{2}(\epsilon)>0$ for sufficiently small $\epsilon>0$. To see why $a_{2}(0) > 0$, simple calculations yield
\begin{align*}
a_{2}(0) = {} & p\tau  k(b_2 - 1) + 2 p\tau  + N(1-b_2) - (N+2) \\
= {} & (N - p \tau k)(1-b_2) - (N+2) + 2 p\tau \\
= {} & ( N - \tau (p+1) )(1-b_2) - (N+2) + 2 p\tau \\
= {} & (\tau (p+1) - N)b_2 + N - \tau (p+1) - (N+2) + 2 p\tau \\
= {} & (\tau (p+1) - N)b_2 - 2 p\tau  + 2 p\tau ~ (\text{since } \tau (p+1) + 2 = 2p \tau) \\
= {} & (p+1)(\tau - N/(p+1))b_2,
\end{align*}
where the last line is positive due to the subcritical condition \eqref{subcritical range} so long as $b_2 > 0$. Thus, let us now show $b_2 > 0$. From \eqref{theta} we have that
\begin{align*}
1 = {} & \frac{1}{z} + \frac{1}{z'} = \frac{\theta_1}{\lambda_1} + \frac{\theta_2}{\lambda_2} + \frac{2-\theta_1 - \theta_2}{\mu} \\
= {} & \theta_1 (\frac{1}{\ell} - \frac{1}{N-1}) + \theta_2 (1 - \frac{1}{N-1}) + (2-\theta_1 - \theta_2)(\frac{1}{k} - \frac{1}{N-1}).
\end{align*}
Then, from this, we obtain that
$$b_2 = 1 - \frac{2-\theta_1 - \theta_2}{k} = \frac{\theta_1}{\ell} + \theta_2 - \frac{2}{N-1}.$$
So we only need to verify that
$$\frac{\theta_1}{\ell} + \theta_2 > \frac{2}{N-1},$$ 
and we do so by considering two cases due to \eqref{z}.

\medskip

In the case where $$\dfrac{1}{\ell} - \dfrac{1}{N-1} \leq \dfrac{1}{1+p} + \dfrac{1}{N-1},$$
we can take $\frac{1}{z} = \frac{1}{\ell} - \frac{1}{N-1}$. Inserting this into \eqref{theta} yields $\theta_1 = 1$. Indeed, 
\begin{align*}
\frac{\theta_1}{\ell} + \theta_2 - \frac{2}{N-1}  \geq \frac{\theta_1}{\ell} - \frac{2}{N-1} = \frac{1}{\ell} - \frac{2}{N-1} > \frac{1}{k} - \frac{2}{N-1} > 0.
\end{align*}

In the case where $$ \dfrac{1}{1+p} + \dfrac{1}{N-1} < \dfrac{1}{\ell} - \dfrac{1}{N-1},$$
we can set $1/z = 1/(1+p) + 1/(N-1)$. Inserting this into \eqref{theta} shows that $1/z' = 1/\mu$ and so $\theta_{2} = 0$. Therefore, it suffices to verify that 
\begin{equation}\label{this}
\frac{\theta_1}{\ell} > \frac{2}{N-1}.
\end{equation}
From \eqref{theta} we also deduce that
\begin{equation}\label{caseii}
\frac{1}{1+p} = \frac{\theta_1}{\ell} + \frac{1-\theta_1}{k} - \frac{2}{N-1},
\end{equation}
and from \eqref{caseii} we obtain
\[ \frac{\theta_1}{\ell} - \frac{2}{N-1} = \frac{1}{1 + p} - \frac{p(1-\theta_1)}{1+p}, \]
which indicates that \eqref{this} holds if and only if $p < 1/(1-\theta_1)$. Now, to show $p(1-\theta_1) < 1$, we first note that \eqref{caseii} is equivalent to
$$\frac{1}{\lambda_1} - \Big( \frac{1}{1+p} + \frac{1}{N-1} \Big) = p(1-\theta_1) \Big(\frac{1}{p \lambda_1} - \frac{1}{p \mu}\Big).$$
Hence, from this we arrive at
\begin{align*}
p(1-\theta_1) = {} & \frac{\frac{p}{q} - \frac{1}{N-1} - \Big( \frac{1}{1+p} + \frac{1}{N-1}\Big)}{\frac{1}{q} - \frac{1}{1+p}} \\
= {} & \frac{\frac{p}{q} - \frac{1}{1+p} - \frac{2}{N-1}}{\frac{1}{q} - \frac{1}{1+p} } =  \frac{\frac{p-1}{q} + \frac{1}{q} - \frac{1}{1+p} - \frac{2}{N-1}}{\frac{1}{q} - \frac{1}{1+p} } < 1,
\end{align*}
where the inequality in the last line is true because
$$\frac{p-1}{q} - \frac{2}{N-1} < 0 \,\text{ or }\, q > (N-1)(p-1)/2.$$
This completes the estimation of $G_{2}(R)$.

%
%

\subsubsection*{Step 3:} From Steps 1 and 2, we may fix $a, b > 0$ and choose a strictly increasing sequence $\{R_j\}_{j=1}^{\infty}$ such that
$$ F(R_j)^b \leq CR_{j}^{-a}$$
which, after sending $R_j \rightarrow \infty$, implies that $\|u\|_{L^{p+1}(\mathbb{R}^N)} = 0$. Hence, $u\equiv 0$ in $\mathbb{R}^N$, and we arrive at a contradiction. This completes the proof of the theorem.
\end{proof}

\vspace*{5mm}

\noindent{\bf Acknowledgements:} Part of this work was completed while the author participated in the 2016 Summer Research Program at the Mathematical Sciences Research Institute (MSRI). The author wishes to thank MSRI for their hospitality and support during his visit. 

\bibliographystyle{plain}



\end{document}